\newtheorem{theorem}{Theorem}[section]
\newtheorem{proposition}[theorem]{Proposition}
\newtheorem{lemma}[theorem]{Lemma}
\newtheorem{corollary}[theorem]{Corollary}
\theoremstyle{definition}
\newtheorem{definition}[theorem]{Definition}
\newtheorem{conjecture}[theorem]{Conjecture}
\newtheorem{remark}[theorem]{Remark}
\begin{document}

\title{Lower Bounds on Face Numbers of Polytopes with $m$ Facets}

\author{Joshua Hinman\\
\small Department of Mathematics\\
\small University of Washington\\
\small Seattle, WA 98195-4350, USA\\
\small \texttt{joshrh@uw.edu}}
\date{}

\maketitle

\begin{abstract}
    Let $P$ be a convex $d$-polytope and $0 \leq k \leq d-1$. In 2023, this author proved the following inequalities, resolving a question of B\'ar\'any:
    \[
    \frac{f_k(P)}{f_0(P)} \geq \frac{1}{2}\biggl[{\lceil \frac{d}{2} \rceil \choose k} + {\lfloor \frac{d}{2} \rfloor \choose k}\biggr],
    \qquad
    \frac{f_k(P)}{f_{d-1}(P)} \geq \frac{1}{2}\biggl[{\lceil \frac{d}{2} \rceil \choose d-k-1} + {\lfloor \frac{d}{2} \rfloor \choose d-k-1}\biggr].
    \]
    We show that for any fixed $d$ and $k$, these are the tightest possible linear bounds on $f_k(P)$ in terms of $f_0(P)$ or $f_{d-1}(P)$. We then give a stronger bound on $f_k(P)$ in terms of the Grassmann angle sum $\gamma_k^2(P)$. Finally, we prove an identity relating the face numbers of a polytope with the behavior of its facets under a fixed orthogonal projection of codimension two.
\end{abstract}

\section{Introduction}

Given the number of facets of a $d$-polytope, or the number of vertices, what is the range of possible values for its other face numbers? This question is at least half-answered: the Upper Bound Theorem, proven by McMullen in 1970, gives the maximum face numbers of a $d$-polytope with $n$ vertices \cite{mcmullen70}. These maxima are achieved by \emph{neighborly} $d$-polytopes, including the \emph{cyclic polytope} $C(d,n)$.

We concern ourselves with the other half of the question: if $P$ is a $d$-polytope with $m$ facets, how small can its other face numbers be? For $P$ simple, we get the following answer from Barnette's Lower Bound Theorem of 1973 \cite{barnette73}. The polar dual $P^*$ of $P$ is a simplicial $d$-polytope with $m$ vertices. Thus,
\[
    f_0(P) = f_{d-1}(P^*) \geq (d-1)m - (d+1)(d-2),
\]
and for $1 \leq k \leq d-2$,
\[
    f_k(P) = f_{d-k-1}(P^*) \geq {d \choose k+1}m - {d+1 \choose k+1}(d-k-1).
\]

Likewise, for $P$ simplicial, Kalai determined the minimum possible values of $f_k(P)$ in 1991 \cite{kalai91}. This work utilized the \emph{$g$-theorem}, a set of necessary and sufficient conditions for the $f$-vector of a simplicial $d$-polytope, proven in 1980 by Billera and Lee \cite{billera80,billera81} (sufficiency) and Stanley \cite{stanley80} (necessity). Among Kalai's 1991 results was the Generalized Upper Bound Theorem: if $P$ is a simplicial $d$-polytope and $f_{d-1}(P) \geq f_{d-1}(C(d,n))$ for some $n$, then $f_k(P) \geq f_k(C(d,n))$ for all $0 \leq k \leq d-1$. In other words, if there exists a neighborly $d$-polytope with $m$ facets, then it simultaneously minimizes all other face numbers among simplicial $d$-polytopes with $m$ facets. However, for general $m$, there is not necessarily a single polytope which minimizes all other face numbers simultaneously.

For general $d$-polytopes with $m$ facets, our knowledge is concentrated in small values of $m$. In 2021, Xue determined the minimum possible face numbers for $m \leq 2d$ \cite{xue20}, proving a 1967 conjecture of Gr\"unbaum \cite{grunbaum68}. Xue then extended her results to the case of $m = 2d+1$ \cite{xue23}.

Very recently, this author proved the following face number relations \cite{hinman23}, resolving a question of B\'ar\'any \cite{barany98}. For all integers $0 \leq k < d$, define
\[
\rho(d, k) = \frac{1}{2}\biggl[{\lceil \frac{d}{2} \rceil \choose k} + {\lfloor \frac{d}{2} \rfloor \choose k}\biggr].
\]

\begin{theorem}[Hinman]
\label{bound}
Let $P$ be a $d$-polytope, and suppose $0 \leq k \leq d-1$. Then
\begin{align}
\frac{f_k(P)}{f_0(P)} &\geq \rho(d, k), \label{eqn:bound}\\
\frac{f_k(P)}{f_{d-1}(P)} &\geq \rho(d, d-k-1). \label{eqn:dual}
\end{align}
In the former, equality holds precisely when $k=0$ or when $k=1$ and $P$ is simple. In the latter, equality holds precisely when $k=d-1$ or when $k=d-2$ and $P$ is simplicial.
\end{theorem}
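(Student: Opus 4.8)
Since the polar dual $P^{*}$ of a $d$-polytope satisfies $f_{k}(P)=f_{d-1-k}(P^{*})$, with $P$ simple iff $P^{*}$ simplicial, the inequality \eqref{eqn:dual} for $P$ is exactly \eqref{eqn:bound} for $P^{*}$ at index $d-1-k$, and the equality cases correspond as well; so it suffices to treat \eqref{eqn:bound}. My plan there is to pick a generic direction $u\in S^{d-1}$ and consider the functional $x\mapsto\langle u,x\rangle$. Every $k$-face $F$ of $P$ attains its minimum at a unique vertex, and that vertex is $v$ exactly when every edge of $F$ at $v$ increases $u$. Summing over vertices and averaging over $u$ uniform on the sphere gives
\[
f_{k}(P)=\sum_{v\in\operatorname{vert}(P)}S_{k}(v),\qquad S_{k}(v):=\sum_{\substack{F\ni v\\ \dim F=k}}\gamma(v,F),
\]
where $\gamma(v,F):=\Pr_{u}\bigl[\langle u,w-v\rangle>0\text{ for every edge }[v,w]\text{ of }F\bigr]$ is the intrinsic external angle of $F$ at $v$. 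Since $f_{0}(P)=\sum_{v}1$, it is enough to prove the \emph{local} inequality $S_{k}(v)\ge\rho(d,k)$ at every vertex of every $d$-polytope, and equality in \eqref{eqn:bound} forces $S_{k}(v)=\rho(d,k)$ at every $v$.

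The case of a \emph{simple} vertex is clean and already reveals the equality cases. If $v$ is simple its edge-directions $e_{1},\dots,e_{d}$ are linearly independent, the $k$-faces of $P$ through $v$ are indexed by the $k$-subsets $T\subseteq[d]$, and such a face has $v$ as its $u$-minimum iff $\langle u,e_{i}\rangle>0$ for all $i\in T$. Hence $S_{k}(v)=\mathbb{E}_{u}\binom{A(u)}{k}$ with $A(u):=\#\{i:\langle u,e_{i}\rangle>0\}$. By the central symmetry $u\leftrightarrow-u$, $A(u)$ and $d-A(u)$ are equidistributed, so $S_{k}(v)=\tfrac12\,\mathbb{E}_{u}\bigl[\binom{A(u)}{k}+\binom{d-A(u)}{k}\bigr]$; the function $a\mapsto\binom{a}{k}+\binom{d-a}{k}$ is convex on $\{0,\dots,d\}$ and symmetric about $d/2$, hence minimized at $\lfloor d/2\rfloor$ and $\lceil d/2\rceil$, where it equals $2\rho(d,k)$. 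This gives $S_{k}(v)\ge\rho(d,k)$. A dual-basis argument shows $A(u)$ takes every value $0,\dots,d$ with positive probability, and since $\binom{d}{k}>2\rho(d,k)$ for $k\ge2$ the inequality is then strict for $k\ge2$; for $k=1$ one has $S_{1}(v)=\mathbb{E}_{u}[A(u)]=\tfrac12\deg(v)$ for \emph{every} vertex, equal to $\rho(d,1)=d/2$ precisely when $\deg(v)=d$; and $S_{0}(v)=1=\rho(d,0)$ always. This yields the stated equality characterization.

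The remaining task—proving $S_{k}(v)\ge\rho(d,k)$, with strict inequality for $k\ge2$, at a \emph{non-simple} vertex—is the heart of the matter and the step I expect to be the main obstacle. Now the edge-directions are dependent, the $k$-faces through $v$ are not indexed by subsets, and $S_{k}(v)$ must be controlled as an honest sum of spherical external angles of the $k$-dimensional faces of the cone $C=\operatorname{cone}(P,v)$. One route would be to perturb the $\ge d+1$ facet hyperplanes through $v$ so as to truncate $v$ into a small simple polytope, reducing to the simplicial case; the snag is that truncation also changes $f_{0}$, so one must show the vertex-local statement survives the perturbation. Another is to slice $C$ by a generic central hyperplane $u^{\perp}$: the slice is a pointed $(d-1)$-cone whose faces are exactly the faces of $C$ met by $u^{\perp}$, and averaging over $u$ yields, for $k\ge2$, an identity of the shape $S_{k}(v)=\tfrac12\bigl(f_{k-1}(P/v)-\mathbb{E}_{u}f_{k-2}(P/v\cap u^{\perp})\bigr)$, after which one would induct on $d$ using a lower bound on $f_{k-1}(P/v)$—but this needs a matching upper bound on the expected face numbers of the section, which is delicate since hyperplane sections can be combinatorially rich. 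Either way the crux is that a vertex figure can be an essentially arbitrary $(d-1)$-polytope, so a genuinely geometric input is needed to control the external angles of the $k$-faces of a convex cone. For $k=2$ this input is exactly the fact that a spherically convex polygon has perimeter strictly less than $2\pi$ (applied to the $2$-faces of $C$), which already delivers the strict inequality; for general $k$ one needs a sharp higher-dimensional analogue recovering precisely the constant $\rho(d,k)$ and pinning down equality, and establishing that angle inequality in the required sharp form is where I expect the real work to lie.
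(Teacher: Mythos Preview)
Your reduction and framework are correct: polarity reduces \eqref{eqn:dual} to \eqref{eqn:bound}; the identity $f_k(P)=\sum_v S_k(v)$ obtained by averaging the $u$-minimum assignment is valid; and your simple-vertex computation, including the convexity argument and the equality analysis for $k\le 1$, is clean and right. But you have not proved the theorem, and you say so yourself: the local inequality $S_k(v)\ge\rho(d,k)$ at a \emph{non-simple} vertex is left open, and that is precisely where the content of the result lives. Neither of the two routes you sketch (perturbing to a simple polytope; slicing the tangent cone and inducting) is carried through, and each hits the obstacle you name without a way past it.

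In this paper Theorem~\ref{bound} is quoted from \cite{hinman23} rather than re-proved, but its key ingredient is recorded as Theorem~\ref{angles}: every $(d-1)$-polytope $Q$ satisfies $\varphi_j(Q)\ge\rho(d,d-j-1)$. The derivation of \eqref{eqn:dual} from this is the two-line double count visible in the proof of Proposition~\ref{grassmann}: sum $\varphi_k(F)\ge\rho(d,d-k-1)$ over the facets $F$ of $P$, and use $\sum_{F\supseteq G}\varphi(F,G)=1-\gamma^{2}(P,G)\le 1$ at each $k$-face $G$. Your setup is the polar dual of this---exterior angles summed over vertices instead of interior solid angles summed over facets---and your missing local inequality $S_k(v)\ge\rho(d,k)$ is the cone-dual form of Theorem~\ref{angles} (take $j=d-1-k$ and pass from the tangent cone at $v$ to its polar). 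So you have correctly reduced to the same sharp angle inequality the paper relies on, but not supplied a proof of it; that inequality \emph{is} the theorem, and proving it is exactly the ``real work'' you anticipate at the end.
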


Relation (\ref{eqn:dual}) gives a lower bound on $f_k(P)$ in terms of $f_{d-1}(P) = m$, but it is only sharp in the case of $k \geq d-2$. For general $d,k$ and $m>2d+1$, the minimum possible value of $f_k(P)$ remains unknown. The strongest conjecture in this area is Kalai's \emph{Generalized Upper Bound Conjecture}, which would extend the Generalized Upper Bound Theorem from simplicial to general polytopes \cite{kalai91}.

\begin{conjecture}[Generalized Upper Bound Conjecture]
\label{gubc}
    Let $P$ be a $d$-polytope with $f_{d-1}(P) \geq f_{d-1}(C(d, n))$. Then for all $0 \leq k \leq d-1$, $f_k(P) \geq f_k(C(d, n))$.
\end{conjecture}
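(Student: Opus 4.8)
The plan is to follow the template of Kalai's proof of the Generalized Upper Bound Theorem and look for the right analogues in the general setting. Kalai's argument for simplicial polytopes rests on three ingredients: the face numbers $f_k$ of a simplicial $d$-polytope are nonnegative integer combinations of the entries $g_0,\dots,g_{\lfloor d/2\rfloor}$ of its $g$-vector; by the $g$-theorem this vector is an $M$-sequence; and among all $M$-sequences with a prescribed value of $g_1$ the one realized by $C(d,n)$ is componentwise maximal, which---after rewriting the facet constraint as a linear inequality among the $g_i$---forces $C(d,n)$ to minimize every $f_k$ at once. For a general $d$-polytope $P$ the natural replacement for the second ingredient is Stanley's toric $h$-vector together with Karu's hard Lefschetz theorem for complete fans, which gives $g_i(P) := h_i(P) - h_{i-1}(P) \ge 0$ for $i \le d/2$ and, with the accompanying ring structure, the $g$-theorem for toric $h$-vectors. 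First I would try to run Kalai's extremal $M$-sequence argument at the level of the toric $g$-vector, after checking that the toric $h$-vector of $C(d,n)$ is the componentwise-largest symmetric $M$-sequence compatible with its facet count.

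The main obstacle---and the reason the conjecture is still open rather than routine---is the first ingredient: for general polytopes the face numbers are not determined by, much less nonnegative combinations of, the toric $h$-vector. Already for $3$-polytopes the toric $h$-vector equals $(1,f_0-3,f_0-3,1)$, so it records only the number of vertices; the octahedron and the triangular prism have equal toric $h$-vectors but different numbers of facets. Any proof must therefore work with a finer invariant that does recover all the $f_k$, the natural candidate being the flag $f$-vector, equivalently the $\mathbf{cd}$-index, whose coefficients are known to be nonnegative for polytopes. So the real plan is: write down the $\mathbf{cd}$-index of $C(d,n)$ explicitly, show it is extremal among $\mathbf{cd}$-indices of $d$-polytopes with at least that many facets, and extract the inequalities $f_k(P) \ge f_k(C(d,n))$ as specializations of the flag $f$-vector. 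The hard part is that there is no known ``$g$-theorem for the $\mathbf{cd}$-index'': we have no usable description of which flag $f$-vectors occur, only the Bayer--Billera relations and $\mathbf{cd}$-nonnegativity, and it is unclear these suffice to pin down the cyclic polytope as the minimizer.

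A more incremental route, suggested by the other results of this paper, is to prove the conjecture one codimension at a time. Theorem~\ref{bound} already settles it for $k = d-1$ trivially and for $k = d-2$: relation~(\ref{eqn:dual}) gives $f_{d-2}(P) \ge \frac{d}{2} f_{d-1}(P)$, and the simplicial polytope $C(d,n)$ satisfies $f_{d-2} = \frac{d}{2} f_{d-1}$ exactly, so $f_{d-1}(P) \ge f_{d-1}(C(d,n))$ implies $f_{d-2}(P) \ge f_{d-2}(C(d,n))$. The next case $k = d-3$ escapes Theorem~\ref{bound} because $f_{d-3}$ is no longer a fixed multiple of $f_{d-1}$; here the sharper bound in terms of the Grassmann angle sum $\gamma_{d-3}^{2}(P)$, together with the codimension-two projection identity, looks like the right tool---project $P$ generically onto a $(d-2)$-plane, track the facets of $P$ through their images, and induct. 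I expect the crux of even this special case to be the point where the extremality of the cyclic polytope must be injected: bounding the face numbers of the (possibly non-generic) projection in terms of the facets of $P$ without sacrificing sharpness, which I do not see how to avoid.
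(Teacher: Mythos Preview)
This statement is labeled a \emph{conjecture} in the paper and is presented as open; the paper gives no proof. There is therefore nothing to compare your attempt against, and you should not be trying to produce a proof here.

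To your credit, your write-up is not really a proof at all but an honest research outline that correctly identifies the main obstructions. Your summary of Kalai's simplicial argument is accurate, and your diagnosis of why it does not transfer---the toric $h$-vector fails to determine the face numbers of a non-simplicial polytope, as your $3$-polytope example shows---is exactly the point. The $\mathbf{cd}$-index route is a reasonable thing to mention, and you are right that the lack of a characterization of achievable $\mathbf{cd}$-indices is the wall one hits there.

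Your incremental observation for $k=d-2$ is correct and worth recording: $\rho(d,1)=d/2$, so Theorem~\ref{bound} gives $f_{d-2}(P)\ge \tfrac{d}{2}f_{d-1}(P)$, while any simplicial $d$-polytope, in particular $C(d,n)$, attains equality; hence $f_{d-1}(P)\ge f_{d-1}(C(d,n))$ forces $f_{d-2}(P)\ge f_{d-2}(C(d,n))$. That is a clean consequence of the paper's results. Beyond that, your closing paragraph accurately concedes that the $k=d-3$ case already requires an idea you do not have. So the proposal is best read as a survey of approaches to an open problem, not as a proof; on those terms it is sound, but it does not and cannot stand in for a proof of Conjecture~\ref{gubc}.
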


This note is a continuation of the author's work in \cite{hinman23}. Section \ref{s:preliminaries} provides background on polytopes, faces, and solid angles, as well as the more general \emph{Grassmann angles}. In Section \ref{s:tightness}, we prove that for any fixed $d$ and $k$, (\ref{eqn:dual}) is the tightest possible linear bound on $f_k(P)$ in terms of $f_{d-1}(P)$. Specifically, we show that for neighborly polytopes with increasingly many vertices, $f_k(P)/f_{d-1}(P)$ asymptotically approaches $\rho(d,d-k-1)$. In Section \ref{s:nonlinear}, we give a stronger bound on $f_k(P)$ in terms of both $f_{d-1}(P)$ and the Grassmann angle sum $\gamma_k^2(P)$. Finally, we prove a relation between the face numbers of a polytope and those of its facets under a fixed codimension two projection.


\section{Preliminaries}
\label{s:preliminaries}
This section introduces the concepts and prior results which we will use to study extremal problems on face numbers. We will discuss some significant families of polytopes and their faces (\S \ref{ss:faces}), solid angles of polytopes (\S \ref{ss:angles}), and the more general notion of Grassmann angles (\S \ref{ss:grassmann}).

\subsection{Polytopes and Their Faces}
\label{ss:faces}
We begin by discussing some noteworthy classes of polytopes: \emph{simplicial}, \emph{simple}, and \emph{neighborly}. We give special attention to \emph{cyclic polytopes}, a family of neighborly polytopes which we will use to prove that Theorem \ref{bound} gives tight linear bounds. The reader may refer to \cite{grunbaum03,ziegler95} for any undefined terminology.

\begin{definition}
    A \emph{polytope} is the convex hull of finitely many points in a real vector space. For nonnegative integers $d$, a \emph{$d$-polytope} is a polytope whose affine hull has dimension $d$. We consider the empty set a $(-1)$-polytope.
\end{definition}

\begin{definition}
    Let $P$ be a $d$-polytope. A \emph{face} of $P$ is either $P$ itself or a polytope $H \cap P$, where $H$ is a codimension one hyperplane in $\operatorname{aff}(P)$ which does not intersect the interior of $P$. A \emph{vertex} of $P$ is a zero-dimensional face; a \emph{facet} is a $(d-1)$-dimensional face. For $0 \leq k \leq d-1$, we define $f_k(P)$ as the number of $k$-dimensional faces of $P$.
\end{definition}

\begin{definition}
    A polytope is \emph{simplicial} if each of its facets is a simplex. A polytope is \emph{simple} if its polar dual is simplicial.
\end{definition}

\begin{definition}
    Let $P$ be a simplicial $d$-polytope with vertex set $V$. We say $P$ is \emph{neighborly} if for each subset $U \subset V$ with $|U| \leq \lfloor \frac{d}{2} \rfloor$, there exists a face of $P$ whose vertex set is exactly $U$.
\end{definition}

\begin{definition}
    Let $n > d \geq 2$, and let $\gamma:\mathds{R} \to \mathds{R}^d$ be the moment curve, defined as
    \[
        \gamma(t) = (t, t^2, \ldots, t^d).
    \]
    The \emph{cyclic polytope} $C(d,n) \subset \mathds{R}^d$ is the convex hull of $n$ arbitrary, distinct points in the image of $\gamma$.
\end{definition}

The cyclic polytope is simplicial and neighborly, and its facets are determined by the \emph{Gale evenness condition} (see \cite{gale63}). As a result, the combinatorial type of $C(d,n)$ does not depend on our choice of points on the moment curve.

Since $C(d,n)$ is neighborly, we may observe that for all $0 \leq k < \lfloor \frac{d}{2} \rfloor$,
\[
    f_k(P) = {n \choose k+1}.
\]
The remaining face numbers of $C(d,n)$ are determined by the Dehn--Sommerville equations. The following formulas can be found in \cite{billera97}.

\begin{theorem}
\label{eqn:cyclic}
Let $C(d,n)$ be the cyclic $d$-polytope on $n$ vertices. Then for all $0 \leq k \leq d-1$,
\[
    f_k(C(d,n)) = \frac{n - \delta(n-k-2)}{n-k-1} \sum_{j=0}^{\lfloor d/2 \rfloor} {n-1-j \choose k+1-j}{n-k-1 \choose 2j-k-1+\delta},
\]
where $\delta = \lceil d/2 \rceil - \lfloor d/2 \rfloor$. In particular,
\[
    f_{d-1}(C(d,n)) = {n - \lfloor \frac{d+1}{2} \rfloor \choose n-d} + {n - \lfloor \frac{d+2}{2} \rfloor \choose n-d}.
\]
\end{theorem}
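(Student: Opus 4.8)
The plan is to obtain both identities from the $h$-vector of $C(d,n)$ together with the Dehn--Sommerville equations. Recall that for a simplicial $d$-polytope $P$ the $h$-vector $(h_0(P),\dots,h_d(P))$ is defined by the polynomial identity $\sum_{\ell=0}^{d} f_{\ell-1}(P)\,t^{\ell}(1-t)^{d-\ell} = \sum_{i=0}^{d} h_i(P)\,t^i$; substituting $t = u/(1+u)$ inverts it to $f_{\ell-1}(P) = \sum_{i=0}^{\ell}\binom{d-i}{\ell-i}h_i(P)$, and in particular $f_{d-1}(P) = \sum_{i=0}^d h_i(P)$. The Dehn--Sommerville equations assert $h_i(P) = h_{d-i}(P)$ for all $i$.

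The first step is to compute the $h$-vector of $C(d,n)$. Since $C(d,n)$ is neighborly, $f_{\ell-1} = \binom{n}{\ell}$ for all $0 \le \ell \le \lfloor d/2\rfloor$, and I claim this forces $h_i(C(d,n)) = \binom{n-d-1+i}{i}$ for $0 \le i \le \lfloor d/2\rfloor$. Since the linear map carrying $(h_0,\dots,h_{\lfloor d/2\rfloor})$ to $(f_{-1},\dots,f_{\lfloor d/2\rfloor-1})$ is triangular with $1$'s on the diagonal, hence invertible, it suffices to check that these values reproduce $f_{\ell-1} = \binom{n}{\ell}$, i.e., the Vandermonde-type identity $\sum_{i}\binom{d-i}{\ell-i}\binom{n-d-1+i}{i} = \binom{n}{\ell}$. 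The remaining entries are then pinned down by $h_i = h_{d-i}$, so $h_i(C(d,n)) = \binom{n-d-1+\min(i,\,d-i)}{\min(i,\,d-i)}$ for every $i$.

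The formula for $f_{d-1}(C(d,n))$ follows quickly: in $f_{d-1}(C(d,n)) = \sum_{i=0}^d h_i(C(d,n))$, fold the sum about its midpoint via $h_i = h_{d-i}$ and apply the hockey-stick identity $\sum_{i=0}^{m}\binom{n-d-1+i}{i} = \binom{n-d+m}{m}$; in each parity $d = 2m$ and $d = 2m+1$ this collapses, after one use of Pascal's rule, to the claimed $\binom{n-\lfloor(d+1)/2\rfloor}{n-d} + \binom{n-\lfloor(d+2)/2\rfloor}{n-d}$.

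For the general formula I would substitute the $h$-vector into $f_k(C(d,n)) = \sum_{i=0}^{k+1}\binom{d-i}{k+1-i}h_i(C(d,n))$, rewrite the terms with $i > \lfloor d/2\rfloor$ using $h_i = h_{d-i}$, and reindex them by $j = d-i$. This writes $f_k$ as a sum over $j \le \lfloor d/2\rfloor$ of the quantities $\binom{n-d-1+j}{j}$ against elementary binomials in $j$, with the constant $\delta = \lceil d/2\rceil - \lfloor d/2\rfloor$ recording whether the two halves of the original sum share their middle index --- they do when $d$ is even, but not when $d$ is odd. Collapsing this two-part expression into the single closed form of the statement, and in particular producing the rational prefactor $\frac{n-\delta(n-k-2)}{n-k-1}$ (whose shape differs in the two parities), is a hypergeometric summation of Vandermonde type; this binomial bookkeeping is the step I expect to be the main obstacle. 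One may of course instead simply invoke \cite{billera97}, where the computation is recorded.
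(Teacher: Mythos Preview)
The paper does not prove this theorem at all: it is stated as background and attributed to \cite{billera97} with the sentence ``The following formulas can be found in \cite{billera97}.'' So there is no argument in the paper to compare your proposal against beyond that citation --- and you yourself note at the end that one may simply invoke \cite{billera97}, which is precisely what the paper does.

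As for the sketch you give, it is the standard route and is correct as far as it goes: neighborliness determines $h_i$ for $i\le\lfloor d/2\rfloor$, Dehn--Sommerville fills in the rest, and the inversion $f_{\ell-1}=\sum_i\binom{d-i}{\ell-i}h_i$ recovers the face numbers. Your derivation of $f_{d-1}$ via the hockey-stick identity is fine. For the general $f_k$ formula you correctly identify that after folding via $h_i=h_{d-i}$ one is left with a binomial-sum identity to verify, and you explicitly flag this as ``the main obstacle'' without carrying it out. That is an honest assessment --- the computation is not conceptually deep but is genuinely fiddly, and in practice one either grinds through it parity by parity or appeals to the literature. Since the paper itself is content to cite the result, your proposal (in either of its two endings) matches what the paper does.
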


Note that for any neighborly, simplicial $d$-polytope $P$ on $n$ vertices and all $0 \leq k \leq d-1$, $f_k(P) = f_k(C(d,n))$.

\subsection{Solid Angles}
\label{ss:angles}
Next, we discuss the solid angles of polytopes, a generalization of plane angles to higher dimensions.

\begin{definition}
    Let $P$ be a $d$-polytope and $G$ a face of $P$. Let $B$ be a ball with center in the relative interior of $G$, intersecting exactly the faces of $P$ which contain $G$. We define the \emph{solid angle} $\varphi(P,G)$ as
    \[
        \varphi(P,G) = \frac{\lambda(B \cap P)}{\lambda(P)},
    \]
    where $\lambda$ is the $d$-dimensional Lebesgue volume. For $0 \leq k \leq d-1$, if $\mathcal{G}_k$ is the set of $k$-dimensional faces of $P$, we define
    \[
        \varphi_k(P) = \sum_{G \in \mathcal{G}_k} \varphi(P,G).
    \]
\end{definition}

We may view solid angles as probabilities, as discussed by Feldman and Klain \cite{feldman09}. Let $P \subset \mathds{R}^d$ be a $d$-polytope and $G$ a face of $P$. If $H \subset \mathds{R}^d$ is a codimension one hyperplane chosen uniformly at random, and $\pi:\mathds{R}^d \to H$ is the orthogonal projection map, then $1-2\varphi(P,G)$ is the probability that $\pi(G)$ is a proper face of $\pi(P)$. Summing over all $k$-dimensional faces yields the following identity; see \cite[(5)]{feldman09}.

\begin{theorem}
\label{feldman}
    Let $P \subset \mathds{R}^d$ be a $d$-polytope and $0 \leq k \leq d-1$. Suppose $H \subset \mathds{R}^d$ is a codimension one hyperplane chosen uniformly at random, and let $\pi:\mathds{R}^d \to H$ be the orthogonal projection map. Then
    \[
        f_k(P) - 2\varphi_k(P) = Ef_k(\pi(P)),
    \]
    where $E$ is the expected value.
\end{theorem}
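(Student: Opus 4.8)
The plan is to obtain the identity from the per-face probabilistic interpretation of solid angle recalled just above, together with linearity of expectation. Parametrize the random hyperplane as $H = u^{\perp}$ with $u$ uniform on $S^{d-1}$, so that $\pi = \pi_u$ is orthogonal projection onto $u^{\perp}$. For each $k$-face $G$ of $P$ I would work with its normal cone $N(G,P)$ --- the $(d-k)$-dimensional cone of functionals maximized on $P$ along all of $G$ --- and its tangent cone $T(G,P)$, which is polar to $N(G,P)$ and whose normalized solid angle is $\varphi(P,G)$.

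\emph{Step 1: the single-face statement.} Fix a $k$-face $G$. For $u$ outside a measure-zero set I claim $\pi_u(G)$ is a proper face of $\pi_u(P)$ if and only if $u^{\perp}$ meets $\operatorname{relint} N(G,P)$, equivalently if and only if the functional $\langle u,\cdot\rangle$ changes sign on $N(G,P)$. The backward direction is immediate: any $w \in u^{\perp} \cap \operatorname{relint} N(G,P)$ supports the (proper, since $G \neq P$) face $\pi_u(G) = \pi_u(P^{w})$ of $\pi_u(P)$, where $P^{w}$ is the face of $P$ maximizing $w$. The forward direction uses that, generically, a supporting functional $w \in u^{\perp}$ of the face $\pi_u(G)$ satisfies $G \subseteq P^{w}$ and $\dim P^{w} = \dim \pi_u(G) = k = \dim G$, hence $P^{w} = G$, i.e. $w \in \operatorname{relint} N(G,P)$. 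Since $N(G,P)$ spans a $(d-k)$-dimensional subspace $L$ in which $u^{\perp}$ cuts a hyperplane through the apex of the full-dimensional cone $N(G,P)$, that hyperplane meets $\operatorname{relint} N(G,P)$ exactly when $N(G,P)$ lies in neither closed halfspace of $L$ bounded by $u^{\perp}$. The two excluded events, $N(G,P) \subseteq \{\langle u,\cdot\rangle \ge 0\}$ and $N(G,P) \subseteq \{\langle u,\cdot\rangle \le 0\}$, are by the polarity between $N(G,P)$ and $T(G,P)$ the events $-u \in T(G,P)$ and $u \in T(G,P)$; each has probability $\varphi(P,G)$ by central symmetry of the uniform measure on $S^{d-1}$, and their overlap is null because $N(G,P)$ has nonempty interior. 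This gives $\Pr[\pi_u(G)\text{ is a proper face of }\pi_u(P)] = 1 - 2\varphi(P,G)$, which is the interpretation recorded in \cite{feldman09}.

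\emph{Step 2: summation.} The same genericity --- excluding the finite union $\bigcup_{F}\operatorname{lin}(F)$ over proper faces $F$ of $P$, a null set --- makes $G \mapsto \pi_u(G)$ a bijection between the $k$-faces of $P$ whose image is a proper face of $\pi_u(P)$ and the $k$-faces of $\pi_u(P)$: surjectivity because every $k$-face of $\pi_u(P)$ equals $\pi_u(P^{w})$ with $\dim P^{w} = k$, and injectivity because any $k$-face of $P$ mapping onto a given one must coincide with that $P^{w}$. Hence, almost surely, $f_k(\pi_u(P)) = \sum_{G}\mathds{1}\{\pi_u(G)\text{ is a proper face of }\pi_u(P)\}$, the sum over the $k$-faces $G$ of $P$. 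Taking expectations and inserting Step 1,
\[
Ef_k(\pi(P)) = \sum_{G}\Pr\bigl[\pi_u(G)\text{ is a proper face of }\pi_u(P)\bigr] = \sum_{G}\bigl(1 - 2\varphi(P,G)\bigr) = f_k(P) - 2\varphi_k(P).
\]

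The conceptual core, the single-face probability, is essentially \cite[(5)]{feldman09}; I expect the main obstacle to be Step 2 --- the genericity bookkeeping guaranteeing that a random projection neither collapses a $k$-face, nor merges two distinct $k$-faces, nor manufactures a $k$-face of $\pi_u(P)$ from the image of a face of $P$ of dimension $> k$, all of which are confined to finite unions of proper subspaces of directions. One should also mind the case $k = d-1$, where $\pi(P)$ is itself $(d-1)$-dimensional and has no proper $(d-1)$-face, consistent with $f_{d-1}(P) - 2\varphi_{d-1}(P) = m - 2\cdot\tfrac{m}{2} = 0$.
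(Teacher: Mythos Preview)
Your proposal is correct and matches the paper's approach exactly: the paper does not give a self-contained proof of this theorem but simply records the per-face interpretation $\Pr[\pi(G)\text{ is a proper face of }\pi(P)] = 1 - 2\varphi(P,G)$ from \cite{feldman09} and remarks that summing over $k$-faces yields the identity. Your Step~1 supplies the normal-cone/tangent-cone argument behind that per-face probability and your Step~2 supplies the genericity bookkeeping behind ``summing,'' so you have filled in precisely the details the paper delegates to the citation.
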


We conclude our discussion of solid angles with one additional theorem from \cite{hinman23}. Recall that for all integers $0 \leq k \leq d-1$, we define
\[
\rho(d, k) = \frac{1}{2}\biggl[{\lceil \frac{d}{2} \rceil \choose k} + {\lfloor \frac{d}{2} \rfloor \choose k}\biggr].
\]

\begin{theorem}[Hinman]
\label{angles}
For all $(d-1)$-polytopes $Q$ and all $0 \leq k \leq d-2$,
\[
\varphi_k(Q) \geq \rho(d, d-k-1).
\]
\end{theorem}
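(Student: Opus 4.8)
The plan is to prove Theorem~\ref{angles} by induction on $d$. The case $k=d-2$ holds in every dimension: the $(d-2)$-faces of the $(d-1)$-polytope $Q$ are its facets, each of solid angle $\tfrac12$, so $\varphi_{d-2}(Q)=\tfrac12 f_{d-2}(Q)\ge\tfrac d2=\rho(d,1)$, with equality exactly when $Q$ is a simplex; and the base case $d=2$ is $\varphi_0(\text{segment})=1=\rho(2,1)$. For the inductive step (so $0\le k\le d-3$) I would apply Theorem~\ref{feldman} to $Q$ itself, viewed in $\mathds{R}^{d-1}$: for a uniformly random direction $u$ with orthogonal projection $\pi$, almost surely $\pi Q$ is a $(d-2)$-polytope and
\[
\varphi_k(Q)=\tfrac12\bigl(f_k(Q)-E\,f_k(\pi Q)\bigr).
\]
Equivalently, $\varphi_k(Q)$ is the expected number of $k$-faces of $Q$ lying in the lower boundary of $Q$ with respect to $u$, and the task becomes to show this ``expected lower-face count'' is at least $\rho(d,d-k-1)$.

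For the inductive engine I would pass through the facets of $Q$. Summing over facets $F$, one has $\sum_F\varphi_k(F)=\sum_{\dim G=k}\sum_{F\supseteq G}\varphi(F,G)$, and for a fixed $k$-face $G$ the inner sum is the total normalized solid angle of the facets of the tangent cone $T_Q(G)$, since each $T_F(G)$ is a facet of $T_Q(G)$. A cone lemma then bounds this inner sum by $1$: it equals the ratio of the surface area of the geodesically convex spherical polytope $T_Q(G)\cap S$ to that of a great subsphere, and a convex region contained in a hemisphere has boundary measure at most that of the equator. Hence $\sum_F\varphi_k(F)\le f_k(Q)$, and feeding in the inductive hypothesis $\varphi_k(F)\ge\rho(d-1,d-k-2)$ for each of the (at least $d$) facets of $Q$ gives a lower bound on $f_k(Q)$. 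Matching the binomial coefficients via the Pascal identity $\binom ab=\binom{a-1}b+\binom{a-1}{b-1}$ (which yields a recursion for $\rho(d,\cdot)$ in terms of $\rho(d-1,\cdot)$), and applying Theorem~\ref{feldman} one further level down to rewrite $E\,f_k(\pi Q)$ through $\varphi_k(\pi Q)$ and another shadow, one can try to run the induction simultaneously over all admissible $k$ so that the facet gain offsets the projection loss.

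The main obstacle --- and where the real content sits --- is controlling $E\,f_k(\pi Q)$. Pointwise there is no lower bound on the solid angle of a pointed cone in terms of the solid angles of its facets (a very sharp cone can have large facets), so the estimate cannot be purely local; and one checks directly that crude bounds --- $\varphi_k$ of a simplex, or $f_k$ of a minimally shellable $(d-2)$-ball --- fall short of $\rho(d,d-k-1)$ for intermediate $k$, so the simplex is not the extremal shape. The argument must instead use that the lower and upper boundaries of $Q$ are coupled symmetrically along their common horizon $\partial(\pi Q)$, so that a deficiency of lower $k$-faces in one direction is forced to be repaid elsewhere. I expect the cleanest route is to replace the single codimension-one projection of Theorem~\ref{feldman} by an average over orthogonal projections of all codimensions --- that is, to work with the Grassmann angles and a Grünbaum-type angle-sum relation --- so that the telescoping producing the constant $\rho(d,d-k-1)$ becomes explicit.
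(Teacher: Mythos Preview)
The paper does not prove Theorem~\ref{angles}. It is quoted from \cite{hinman23} as an established result (``This theorem was the key ingredient\ldots'') and then used as a black box in the proof of Proposition~\ref{grassmann}. So there is no proof in this paper to compare your attempt against.

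Evaluating the proposal on its own terms: it is a plan with an explicitly acknowledged gap, and that gap is genuine, not cosmetic. Your cone inequality $\sum_{F\supseteq G}\varphi(F,G)\le 1$ is correct---it is precisely the nonnegativity of $\gamma^2(Q,G)$ in Theorem~\ref{deficiency} applied to the $(d-1)$-polytope $Q$---and combined with the inductive hypothesis it yields $f_k(Q)\ge f_{d-2}(Q)\,\rho(d-1,d-k-2)$, which is just Theorem~\ref{bound} for $Q$. But that is a lower bound on $f_k(Q)$, not on $\varphi_k(Q)=\tfrac12\bigl(f_k(Q)-E\,f_k(\pi Q)\bigr)$. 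Nothing you have written controls $E\,f_k(\pi Q)$ from above: when $Q$ has many vertices both $f_k(Q)$ and $E\,f_k(\pi Q)$ are large, and their difference is exactly the quantity in question. Iterating Theorem~\ref{feldman} through successive projections produces a telescoping chain of expectations, but each step introduces a new unbounded term, and the Pascal recursion for $\rho$ does not force those terms to cancel. Your final sentence about passing to Grassmann angles of all codimensions is a hope, not an argument; indeed, in this very paper the Grassmann-angle identity (Proposition~\ref{grassmann}) is \emph{derived from} Theorem~\ref{angles}, not the other way around.

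The proof in \cite{hinman23} avoids this obstruction by arguing pointwise in the direction $u$ rather than in expectation, so that the uncontrolled global quantity $E\,f_k(\pi Q)$ never appears. If you want to reconstruct the argument, that is the missing idea: fix a generic $u$, count lower $k$-faces facet by facet, and only average at the very end.
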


This theorem was the key ingredient in proving Theorem \ref{bound}, and we will use it similarly here to prove a stronger version of Theorem \ref{bound}.

\subsection{Grassmann Angles}
\label{ss:grassmann}
We next turn our attention to Grassmann angles, introduced by Gr\"unbaum in 1968 \cite{grunbaum68}. These are a generalization of solid angles which further describe a polytope's geometric behavior at one of its faces.

\begin{definition}
    Let $P$ be a $d$-polytope, $G$ a face of $P$, $z$ a point in the relative interior of $G$, and $1 \leq m \leq d-1$. The \emph{Grassmann angle} $\gamma^m(P,G)$ is the probability that for an $m$-dimensional subspace $S \subset \mathds{R}^d$ chosen uniformly at random,
    \[
        (S+z) \cap P = \{z\}.
    \]
    For $0 \leq k \leq d-1$, if $\mathcal{G}_k$ is the set of $k$-dimensional faces of $P$, we define
    \[
        \gamma_k^m(P) = \sum_{G \in \mathcal{G}_k} \gamma^m(P,G).
    \]
\end{definition}

Note that $\gamma^m(P,G)$ is independent of our choice of $z$, and $\gamma^m(P,G)=0$ if $\dim G > d-m-1$.

Like solid angles, Grassmann angles can be understood in terms of projections. Let $P$ be a $d$-polytope and $1 \leq m \leq d-1$. Let $H \subset \mathds{R}^d$ be a codimension $m$ hyperplane chosen uniformly at random, and let $\pi:\mathds{R}^d \to H$ be the orthogonal projection map. Then for each face $G$ of $P$, $\pi(G)$ is a proper face of $\pi(P)$ with probability $\gamma^m(P,G)$. In particular, $\gamma^1(P,G)$ is $1-2\varphi(P,G)$; see Theorem \ref{feldman} and our surrounding discussion.

More generally, linearity of expectation gives us the following analogue of Theorem \ref{feldman} for Grassmann angles.

\begin{lemma}
    \label{grass_expectation}
    Let $P \subset \mathds{R}^d$ be a $d$-polytope, $1 \leq m \leq d-1$, and $0 \leq k \leq d-1$. Let $H \subset \mathds{R}^d$ be a codimension $m$ hyperplane chosen uniformly at random, and let $\pi:\mathds{R}^d \to H$ be the orthogonal projection map. Then
    \[
        \gamma_k^m(P) = Ef_k(\pi(P)),
    \]
    where $E$ is the expected value.
\end{lemma}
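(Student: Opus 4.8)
The plan is to run the same indicator-function argument that proves Theorem~\ref{feldman}, now with a codimension-$m$ projection. For each $G \in \mathcal{G}_k$, let $X_G$ be the indicator random variable of the event that $\pi(G)$ is a proper face of $\pi(P)$. By the projection description of Grassmann angles recalled just before the lemma, $E[X_G] = \gamma^m(P,G)$, so linearity of expectation gives
\[
E\Bigl[\,\sum_{G \in \mathcal{G}_k} X_G\,\Bigr] \;=\; \sum_{G \in \mathcal{G}_k} \gamma^m(P,G) \;=\; \gamma_k^m(P).
\]
It therefore suffices to show that, for almost every $H$, one has $\sum_{G \in \mathcal{G}_k} X_G = f_k(\pi(P))$; equivalently, that $G \mapsto \pi(G)$ is a bijection from $\{\,G \in \mathcal{G}_k : X_G = 1\,\}$ onto the set of $k$-faces of $\pi(P)$.

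To produce that bijection I would first restrict to a good value of the uniformly random $m$-dimensional subspace $S := H^{\perp}$. Since $P$ has only finitely many faces, it is enough to avoid two families of Lebesgue-null subsets of the Grassmannian: (A) for each face $F$ of $P$ with $\dim F \le d-m$, the set of $S$ whose direction meets $\operatorname{dir}(\operatorname{aff} F)$ nontrivially; and (B) for each face $F$ of $P$ with $\dim F \ge d-m$, the set of $S$ for which $S^{\perp}$ meets the relative interior of the normal cone $N_F$ of $F$ --- a non-generic event because $\dim N_F + \dim S^{\perp} = (d-\dim F)+(d-m) \le d$ while $0 \notin \operatorname{relint} N_F$. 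For any $S$ outside all of these, two things hold: $\pi$ is injective on the affine hull of every face of dimension $\le d-m$, so it preserves the dimension and relative interior of each such face; and every face of $P$ that is exposed by a linear functional vanishing on $S$ has dimension at most $d-m-1$, since a face $F$ is exposed precisely by the functionals in $\operatorname{relint} N_F$, which by (B) cannot lie in $S^{\perp}$ once $\dim F \ge d-m$.

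Now fix such an $S$. For a $k$-face $F'$ of $\pi(P)$, choose a linear functional $\ell_{F'}$ on $H$ exposing $F'$ and let $\hat{G}_{F'} := \{x \in P : (\ell_{F'}\circ\pi)(x) = \max_P(\ell_{F'}\circ\pi)\}$. This is a face of $P$ with $\pi(\hat{G}_{F'}) = F'$, and because $\ell_{F'}\circ\pi$ vanishes on $S$, genericity forces $\dim \hat{G}_{F'} \le d-m-1$; hence $\pi|_{\hat{G}_{F'}}$ is injective and $\dim \hat{G}_{F'} = \dim F' = k$. Conversely, suppose $G \in \mathcal{G}_k$ has $X_G = 1$ and write $F' := \pi(G)$. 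Every point of $G$ maps under $\pi$ into $F'$, hence maximizes $\ell_{F'}\circ\pi$ over $P$, so $G \subseteq \hat{G}_{F'}$; as $G$ and $\hat{G}_{F'}$ are faces of $P$ of the same dimension with $G \subseteq \hat{G}_{F'}$, they coincide (a face equals the intersection of the polytope with its own affine hull). Therefore $F' \mapsto \hat{G}_{F'}$ and $G \mapsto \pi(G)$ are mutually inverse bijections between the $k$-faces of $\pi(P)$ and $\{\,G \in \mathcal{G}_k : X_G = 1\,\}$, the required almost-sure identity holds, and taking expectations proves the lemma; the degenerate range $k \ge d-m$, where $f_k(\pi(P))$ and $\gamma^m_k(P)$ both vanish, is consistent with this. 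The one genuinely delicate point is the genericity analysis of the middle paragraph --- in particular the normal-cone dimension count dictating which faces of $P$ can be exposed by a functional vanishing on $S$ --- which I expect to handle via standard Grassmannian transversality, much as in Gr\"unbaum's original treatment of Grassmann angles \cite{grunbaum68}, rather than carrying it out in detail here.
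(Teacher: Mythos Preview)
Your proposal is correct and follows exactly the approach the paper indicates: the paper does not give a written proof of this lemma beyond the sentence ``linearity of expectation gives us the following analogue of Theorem~\ref{feldman},'' together with the preceding remark that $\pi(G)$ is a proper face of $\pi(P)$ with probability $\gamma^m(P,G)$. Your indicator-variable argument and genericity analysis simply fill in the details that the paper leaves implicit, so there is nothing to compare.
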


Of particular interst is the Grassmann angle $\gamma^2(P,G)$. The following result of Gr\"unbaum \cite{grunbaum68} identifies $\gamma^2(P,G)$ with the \emph{angle deficiency} of $P$ at $G$.

\begin{theorem}[Gr\"unbaum]
\label{deficiency}
    Let $P$ be a $d$-polytope with $d \geq 3$, and let $G$ be a face of $P$. Let $\mathcal{F}_G$ be the set of facets of $P$ containing $G$. Then
    \[
        \gamma^2(P,G) = 1-\sum_{F \in \mathcal{F}_G}\varphi(F,G).
    \]
\end{theorem}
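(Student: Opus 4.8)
The plan is to strip off the lineality of $P$ at $G$, thereby reducing the identity to a statement about a single pointed cone, and then to prove that statement by a Crofton-type double count on a sphere. First, dispose of the degenerate cases: if $\dim G=d-1$ then $\mathcal F_G=\{G\}$, $\varphi(G,G)=1$, and $\gamma^2(P,G)=0$; if $\dim G=d-2$ then $G$ lies in exactly two facets $F_1,F_2$, each with $\varphi(F_i,G)=\tfrac12$, and again $\gamma^2(P,G)=0$; so assume $g:=\dim G\le d-3$. Fix $z\in\operatorname{relint}G$, translate so that $z=0$, and let $C\subset\mathds R^d$ be the cone of feasible directions of $P$ at $0$. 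Since $\gamma^2(P,G)$ depends only on the local structure of $P$ near $z$, for every $2$-subspace $S$ we have $S\cap P=\{0\}$ if and only if $S\cap C=\{0\}$, so $\gamma^2(P,G)$ is the probability that a uniform random $2$-plane meets $C$ only at the origin. The lineality space of $C$ is $L:=\operatorname{aff}(G)$, of dimension $g$; write $C=L\oplus K$ with $K:=C\cap L^\perp$ a pointed, full-dimensional cone in the space $L^\perp$ of dimension $n:=d-g\ge 3$. The facets $F\supseteq G$ of $P$ correspond bijectively to the facets $\bar F$ of $K$, with $\dim\bar F=\dim F-g$, and $\varphi(F,G)$ equals the normalized solid angle $\alpha(\bar F)$ of $\bar F$ inside its linear span, i.e.\ the fraction of the unit sphere of $\operatorname{span}(\bar F)$ that $\bar F$ contains.

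Next I would move the Grassmann angle into $L^\perp$, showing that $\gamma^2(P,G)$ equals the probability that a uniform random $2$-plane in $L^\perp$ meets $K$ only at $0$. For a $2$-plane $S$ with $S\cap L=\{0\}$ — which holds almost surely, since $g\le d-3$ — a short argument using $C=L\oplus K$ and $K\subseteq C$ shows that $S\cap C=\{0\}$ iff $\pi(S)\cap K=\{0\}$, where $\pi:\mathds R^d\to L^\perp$ is orthogonal projection. It then suffices to note that the pushforward under $\pi$ of the uniform measure on the Grassmannian of $2$-planes in $\mathds R^d$ is the uniform measure on the Grassmannian of $2$-planes in $L^\perp$: it is a probability measure, supported up to a null set on $2$-planes of $L^\perp$, and invariant under $O(L^\perp)$ — because $\pi$ commutes with every element of $O(L^\perp)$ extended by the identity on $L$ — and such a measure is unique.

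It remains to prove the cone identity: for a pointed, full-dimensional cone $K\subset\mathds R^n$ with $n\ge3$, the probability that a uniform random $2$-plane meets $K$ only at $0$ equals $1-\sum_{\bar F}\alpha(\bar F)$, the sum over the facets of $K$. Put $\Sigma:=K\cap S^{n-1}$, a compact $(n-1)$-dimensional spherically convex region lying in an open hemisphere, whose facets are the sets $\bar F\cap S^{n-1}$. A uniform random $2$-plane $S$ gives a uniform random great circle $c=S\cap S^{n-1}$, and $S\cap K=\{0\}$ iff $c\cap\Sigma=\varnothing$. For each facet $\bar F$, the circle $c$ meets the great $(n-2)$-sphere $\operatorname{span}(\bar F)\cap S^{n-1}$ in an antipodal pair of points, each uniformly distributed on that subsphere by the same invariance argument; since $\bar F\cap S^{n-1}$ contains no antipodal pair, $\operatorname{Prob}[\,c\cap(\bar F\cap S^{n-1})\ne\varnothing\,]=2\alpha(\bar F)$. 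On the other hand, a generic great circle $c$ either misses $\Sigma$ — and then misses every facet — or meets $\Sigma$ in a proper subarc whose two endpoints lie in the relative interiors of two \emph{distinct} facets (distinct, again because a facet contains no antipodal pair), so $c$ meets exactly two facets. Hence $\sum_{\bar F}\mathds 1[c\cap(\bar F\cap S^{n-1})\ne\varnothing]=2\cdot\mathds 1[c\cap\Sigma\ne\varnothing]$ almost surely; taking expectations yields $\sum_{\bar F}2\alpha(\bar F)=2\bigl(1-\gamma^2(P,G)\bigr)$, which rearranges to the claimed identity.

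The main obstacle is the genericity bookkeeping in the last step: one must verify that the great circles meeting $\Sigma$ only along its boundary, those hitting the codimension-$\ge2$ skeleton of $\Sigma$, and those tangent to some facet-subsphere together form a null set, so that ``$c$ meets exactly two facets'' is the almost-sure alternative. A secondary recurring point is the invariance-and-uniqueness principle used to identify each pushforward measure with a uniform Grassmannian measure. It is precisely the hypothesis $\dim G\le d-3$ — which forces the lineality of $C$ to be exactly $\operatorname{aff}(G)$, hence $K$ pointed — that makes both the reduction and the two ``no antipodal pair'' arguments work.
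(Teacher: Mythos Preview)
The paper does not supply a proof of this theorem; it is quoted from Gr\"unbaum's 1968 paper and used as a black box, so there is no in-paper argument to compare against. That said, your argument is correct and self-contained: the reduction to the pointed cone $K$ in $L^\perp$ is handled cleanly (the equivalence $S\cap C=\{0\}\Leftrightarrow \pi(S)\cap K=\{0\}$ and the $O(L^\perp)$-invariance of the pushforward are both valid), and the spherical Crofton count---each facet hit with probability $2\alpha(\bar F)$, and a generic great circle meeting $\Sigma$ in a single arc whose two endpoints lie on distinct facets---goes through exactly as you describe, with the pointedness of $K$ doing the work in both ``no antipodal pair'' steps. This is essentially the classical integral-geometric route to the identity; the only places that would benefit from an extra line are the ones you already flagged, namely that the exceptional configurations (circles meeting the codimension-$\ge 2$ skeleton, circles tangent to a facet hyperplane, or circles lying in the bounding great sphere of the hemisphere) form a measure-zero set in the Grassmannian, and that the solid angle of $L\oplus\bar F$ in its span equals $\alpha(\bar F)$ because Gaussian measure factors over orthogonal summands.
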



\section{Proof of Tightness}
\label{s:tightness}

In this section, we will prove that the bounds in Theorem \ref{bound} are tight. That is, for any fixed dimension $d$ and $0 \leq k \leq d-1$, Theorem \ref{bound} gives the greatest constant lower bounds on $f_k(P)/f_0(P), f_k(P)/f_{d-1}(P)$ for $d$-polytopes $P$. We will prove this using the cyclic polytope $C(d,n)$; specifically, we will show that as $n$ grows arbitrarily large, $f_k(C(d,n))/f_{d-1}(C(d,n))$ asymptotically approaches $\rho(d, d-k-1)$.

For convenience, we will write $f_k(C(d,n))$ to denote the value given by Theorem \ref{eqn:cyclic} even when $d < 2$ or $k > d-1$.

\begin{lemma}
\label{cyclic}
    For all $n > d \geq 2$ and $0 \leq k \leq d-1$,
    \[
        f_k(C(d,n)) = \rho(d, d-k-1)f_{d-1}(C(d,n)) + f_k(C(d-2,n)).
    \]
\end{lemma}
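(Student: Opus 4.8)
The plan is to reduce everything to the explicit formula for $f_k(C(d,n))$ given in Theorem \ref{eqn:cyclic} and verify the identity as a polynomial (in fact, rational-function) identity in $n$ for each fixed $d$ and $k$. Write $N = n$, $\ell = k+1$, and recall that $\delta = \lceil d/2 \rceil - \lfloor d/2 \rfloor \in \{0,1\}$ records the parity of $d$. The right-hand side has two pieces: the ``main term'' $\rho(d,d-k-1) f_{d-1}(C(d,n))$, which by Theorem \ref{eqn:cyclic} equals $\rho(d,d-k-1)\bigl[\binom{n-\lfloor (d+1)/2\rfloor}{n-d} + \binom{n-\lfloor(d+2)/2\rfloor}{n-d}\bigr]$, and the ``recursive term'' $f_k(C(d-2,n))$, which is the same summation formula with $d$ replaced by $d-2$ (note $\delta$ is unchanged under $d \mapsto d-2$, and the summation range shrinks from $\lfloor d/2\rfloor$ to $\lfloor d/2 \rfloor - 1$). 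So the entire claim is an equality between three hypergeometric-type sums with shared parameter $\delta$.

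The key step is to compare the summation for $f_k(C(d,n))$ with that for $f_k(C(d-2,n))$ term by term: the two sums agree in all summands with $0 \le j \le \lfloor d/2 \rfloor - 1$ \emph{except} that the binomial coefficients $\binom{n-k-1}{2j-k-1+\delta}$ are multiplied against possibly different prefactors, and the sum for $d$ has one extra summand at $j = \lfloor d/2 \rfloor$. I would first handle the common range: show that after accounting for the prefactor $\frac{n-\delta(n-k-2)}{n-k-1}$ (which does not depend on $d$), the difference of the two sums over $0 \le j \le \lfloor d/2\rfloor -1$ telescopes or collapses, leaving only a boundary contribution. Then the $j = \lfloor d/2\rfloor$ summand of the $d$-sum, together with this boundary contribution, should reorganize — using the Pascal-type identity $\binom{a}{b} = \binom{a-1}{b} + \binom{a-1}{b-1}$ and the definition $\rho(d,d-k-1) = \frac12\bigl[\binom{\lceil d/2\rceil}{d-k-1} + \binom{\lfloor d/2\rfloor}{d-k-1}\bigr]$ — into exactly $\rho(d,d-k-1)\bigl[\binom{n-\lfloor(d+1)/2\rfloor}{n-d} + \binom{n-\lfloor(d+2)/2\rfloor}{n-d}\bigr]$. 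Splitting into the cases $d$ even ($\delta=0$) and $d$ odd ($\delta=1$) will likely be cleanest, since the prefactor degenerates differently and the two floor expressions $\lfloor(d+1)/2\rfloor, \lfloor(d+2)/2\rfloor$ behave differently in each parity.

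An alternative, possibly slicker route avoids the closed form entirely: use Lemma \ref{grass_expectation} with $m=2$, which gives $\gamma_k^2(C(d,n)) = E f_k(\pi(C(d,n)))$ for a random codimension-two projection $\pi$, combine this with the codimension-two projection identity promised in the introduction (relating $f_k(P)$ to the facets' behavior under $\pi$), and exploit the fact that a generic codimension-two projection of $C(d,n)$ is closely related to $C(d-2,n)$. However, since that projection identity is only stated as forthcoming, the safe and self-contained approach is the direct computation above. The main obstacle I anticipate is purely bookkeeping: correctly tracking the index shift in the summation range, the floor functions, and the degenerate prefactor when $\delta = 1$, so that the ``extra'' summand and the residual boundary terms assemble precisely into the coefficient $\rho(d,d-k-1)$ rather than something off by a Pascal step. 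Once the parity cases are separated, each should reduce to a single application of the hockey-stick or Vandermonde identity.
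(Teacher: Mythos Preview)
Your approach is essentially the paper's, but you are making it harder than necessary. The summands in Theorem~\ref{eqn:cyclic} depend only on $j$, $n$, $k$, and $\delta$ --- not on $d$ itself --- and the prefactor $\frac{n-\delta(n-k-2)}{n-k-1}$ is likewise independent of $d$ (as you yourself note). Hence the sums for $d$ and $d-2$ agree \emph{exactly} on the common range $0 \le j \le \lfloor d/2\rfloor - 1$: there is no telescoping to do and no ``boundary contribution'' from that range. The entire difference $f_k(C(d,n)) - f_k(C(d-2,n))$ is the single extra summand at $j = \lfloor d/2\rfloor$, namely
\[
\frac{n-\delta(n-k-2)}{n-k-1}\binom{n-1-\lfloor d/2\rfloor}{\,k+1-\lfloor d/2\rfloor\,}\binom{n-k-1}{n-d},
\]
and the remaining work is exactly what you describe: split on parity and massage this into $\rho(d,d-k-1)\,f_{d-1}(C(d,n))$ via one or two elementary binomial identities. (The case $k < \lfloor d/2\rfloor - 1$ falls out automatically, since the first binomial above then vanishes.) Your ``alternative route'' via projections is circular here: the projection identity in \S\ref{ss:combinatorial} is used together with this lemma, not in place of it, and a \emph{generic} codimension-two projection of $C(d,n)$ is not $C(d-2,n)$ --- only the specific coordinate projection is.
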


\begin{proof}
Let $n > d \geq 2$ and $0 \leq k \leq d-1$. If $k < \lfloor \frac{d}{2} \rfloor - 1$, then $\rho(d, d-k-1) = 0$ and $f_k(C(d,n)) = f_k(C(d-2,n)) = {n \choose k+1}$, so we are done.

Suppose $k \geq \lfloor \frac{d}{2} \rfloor - 1$. By Theorem \ref{eqn:cyclic},
\begin{align*}
    f_k(C(d,n)) - f_k(C(d-2,n)) &= \frac{n - \delta(n-k-2)}{n-k-1} {n - \lfloor\frac{d}{2}\rfloor - 1 \choose n-k-2}{n-k-1 \choose n-d}\\
    &= \frac{n - \delta(n-k-2)}{n - \lfloor\frac{d}{2}\rfloor} {n - \lfloor \frac{d}{2} \rfloor \choose n-k-1}{n-k-1 \choose n-d} \\
    &= \frac{n - \delta(n-k-2)}{n - \lfloor\frac{d}{2}\rfloor} {n - \lfloor \frac{d}{2} \rfloor \choose n-d}{\lceil \frac{d}{2} \rceil \choose d-k-1} \\
    &= \frac{n - \delta(n-k-2)}{\lceil \frac{d}{2} \rceil} {n - \lfloor\frac{d}{2}\rfloor - 1 \choose n-d}{\lceil \frac{d}{2} \rceil \choose d-k-1}.
\end{align*}
If $d$ is even, then
\begin{align*}
    \rho(d, d-k-1) &= {\frac{d}{2} \choose d-k-1},\\
    f_{d-1}(C(d,n)) &= \frac{2n}{d}{n - \frac{d}{2} - 1 \choose n-d}.
\end{align*}
If $d$ is odd, then
\begin{align*}
    \rho(d, d-k-1) &= \frac{k+2}{d+1}{\frac{d+1}{2} \choose d-k-1},\\
    f_{d-1}(C(d,n)) &= 2{n - \frac{d-1}{2} - 1 \choose n-d}.
\end{align*}
Thus, in either case,
\[
    f_k(C(d,n)) - f_k(C(d-2,n)) = \rho(d, d-k-1)f_{d-1}(C(d,n)).\qedhere
\]
\end{proof}

Using Lemma \ref{cyclic}, we can prove our desired tightness result.

\begin{theorem}
    \label{tight}
    Let $d$ be a positive integer and $0 \leq k \leq d-1$. There exist infinite families of $d$-polytopes $\{P_n\}, \{Q_n\}$ such that for all $\varepsilon > 0$ and $n$ sufficiently large,
    \begin{align*}
        \frac{f_k(P_n)}{f_0(P_n)} &< \rho(d, k) + \varepsilon,\\
        \frac{f_k(Q_n)}{f_{d-1}(Q_n)} &< \rho(d, d-k-1) + \varepsilon.
    \end{align*}
\end{theorem}

\begin{proof}
The proof is trivial for $d=1$. Fix $d \geq 2$ and $0 \leq k \leq d-1$, and observe that for all $n > d$,
\begin{align*}
    f_{d-1}(C(d,n)) &> \frac{(n-d)^{\lceil (d-1)/2 \rceil}}{\lceil\frac{d-1}{2}\rceil!},\\
    f_k(C(d-2,n)) &< \frac{k+2}{n-k-1} \biggl\lfloor \frac{d-2}{2} \biggr \rfloor n^{\lceil (d-1)/2 \rceil}.
\end{align*}
Thus,
\[
    \lim_{n \to \infty} \frac{f_k(C(d-2,n))}{f_{d-1}(C(d,n))} = 0.
\]
By Lemma \ref{cyclic}, it follows that
\[
    \lim_{n \to \infty} \frac{f_k(C(d,n))}{f_{d-1}(C(d,n))} = \rho(d, d-k-1).
\]
Accordingly, if ${C(d,n)}^\star$ is the polar dual of $C(d,n)$, then
\[
    \lim_{n \to \infty} \frac{f_k({C(d,n)}^*)}{f_{0}({C(d,n)}^*)} = \rho(d, k).\qedhere
\]
\end{proof}


\section{The Search for Stronger Bounds}
\label{s:nonlinear}

While Theorem \ref{bound} gives the tightest linear bounds on $f_k(P)$ in terms of $f_0(P)$ or $f_{d-1}(P)$, there is still room for improvement via nonlinear bounds. In this section, we will use Grassmann angles to prove a stronger version of Theorem \ref{bound} (\S \ref{ss:grassmann_nonlinear}). We will then prove a connected result relating the face numbers of $P$ to a projection in codimension two (\S \ref{ss:combinatorial}).

\subsection{Grassmann Angles and Nonlinear Bounds}
\label{ss:grassmann_nonlinear}

\begin{proposition}
    \label{grassmann}
    For all $d$-polytopes $P$ and $0 \leq k \leq d-1$,
    \[
        f_k(P) \geq \rho(d,d-k-1)f_{d-1}(P) + \gamma_k^2(P).
    \]
\end{proposition}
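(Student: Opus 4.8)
The plan is to combine the codimension-two projection picture for Grassmann angles (Lemma \ref{grass_expectation}) with the solid-angle lower bound of Theorem \ref{angles}, exactly mirroring the way Theorem \ref{bound} was deduced from Theorem \ref{angles} in the codimension-one case. Fix a $d$-polytope $P$ and $0 \leq k \leq d-1$. Let $H \subset \mathds{R}^d$ be a codimension-two subspace chosen uniformly at random and $\pi \colon \mathds{R}^d \to H$ the orthogonal projection; by Lemma \ref{grass_expectation}, $\gamma_k^2(P) = Ef_k(\pi(P))$, so it suffices to show the pointwise (a.s.) inequality $f_k(P) \geq \rho(d,d-k-1)f_{d-1}(P) + f_k(\pi(P))$ and then take expectations — or, more carefully, to produce an inequality whose expectation is the claim.

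First I would classify the facets $F$ of $P$ according to the behavior of $\pi$ restricted to $F$. For a generic choice of $H$, each facet $F$ (a $(d-1)$-polytope) projects either onto a $(d-1)$-polytope (so $\pi$ is injective on a neighborhood, i.e. the projection ``folds'' $F$ over) or onto a $(d-2)$-polytope that is a facet of $\pi(P)$, or onto a lower-dimensional set. The key combinatorial input is that every $(d-2)$-face of $\pi(P)$ — equivalently (generically) every $k$-face of $\pi(P)$ is the image of some $k$-face of $P$ — and every $k$-face $G$ of $P$ with $\pi(G)$ a face of $\pi(P)$ contributes, while the $k$-faces $G$ of $P$ with $\pi(G)$ \emph{not} a face of $\pi(P)$ are the ones we must count using angles. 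Concretely, Theorem \ref{deficiency} says $\gamma^2(P,G) = 1 - \sum_{F \supset G}\varphi(F,G)$ for each $k$-face $G$; summing over the $f_k(P)$ choices of $G$ gives
\[
\gamma_k^2(P) = f_k(P) - \sum_{G \in \mathcal{G}_k}\sum_{F \supset G}\varphi(F,G) = f_k(P) - \sum_{F \in \mathcal{F}}\sum_{\substack{G \subset F\\ \dim G = k}}\varphi(F,G) = f_k(P) - \sum_{F \in \mathcal{F}}\varphi_k(F),
\]
where $\mathcal{F}$ is the set of facets of $P$ and we interchanged the order of summation. Now apply Theorem \ref{angles} to each facet $F$, which is a $(d-1)$-polytope: for $0 \leq k \leq d-2$ we get $\varphi_k(F) \geq \rho(d,d-k-1)$, hence $\sum_{F}\varphi_k(F) \geq \rho(d,d-k-1)f_{d-1}(P)$, and therefore $f_k(P) = \gamma_k^2(P) + \sum_F \varphi_k(F) \geq \gamma_k^2(P) + \rho(d,d-k-1)f_{d-1}(P)$, which is the claim.

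A few loose ends to tie off. The identity from Theorem \ref{deficiency} requires $d \geq 3$; the cases $d \leq 2$ must be checked by hand (for $d=1,2$ the inequality is elementary, and for $k = d-1$ note $\gamma_{d-1}^2(P) = 0$ since $\dim G = d-1 > d - 3$, while $\rho(d,0) = 1$ and $f_{d-1}(P) = f_{d-1}(P)$, giving equality). Also the range $0 \le k \le d-2$ is where Theorem \ref{angles} applies; I should confirm the statement of Theorem \ref{angles} covers $k=0$ as well (it does: ``$0 \le k \le d-2$''), so the only special case is $k = d-1$, handled above. I expect the main ``obstacle'' here is really just bookkeeping — correctly setting up the double-sum interchange $\sum_G \sum_{F \supset G} = \sum_F \sum_{G \subset F}$ over $(k$-face, facet$)$ incidence pairs, and being careful that $\varphi_k(F)$ as defined sums $\varphi(F,G)$ over exactly the $k$-faces $G$ of $F$. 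Unlike the original derivation of Theorem \ref{bound}, no probabilistic projection argument or genericity discussion is actually needed once Theorem \ref{deficiency} is invoked; the whole proof collapses to one interchange of summation plus one application of Theorem \ref{angles} per facet.
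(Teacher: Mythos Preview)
Your proof is correct and is essentially identical to the paper's: sum Theorem \ref{deficiency} over all $k$-faces, interchange the double sum to get $\gamma_k^2(P) = f_k(P) - \sum_{F \in \mathcal{F}} \varphi_k(F)$, and apply Theorem \ref{angles} to each facet. Your opening projection discussion is, as you yourself note, unnecessary once Theorem \ref{deficiency} is invoked; the paper skips straight to the summation identity, and your treatment of the edge cases $d \le 2$ and $k = d-1$ is in fact more careful than the paper's.
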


\begin{proof}
    Let $P$ be a $d$-polytope and $0 \leq k \leq d-1$. Let $\mathcal{F}$ be the set of facets of $P$ and $\mathcal{G}$ the set of $k$-dimensional faces. Summing Theorem \ref{deficiency} over $\mathcal{G}$, we find
    \[
        \gamma^2_k(P) = f_k(P) - \sum_{\substack{F \in \mathcal{F} \\ {G \in \mathcal{G}}}}\varphi(F,G) = f_k(P) - \sum_{F \in \mathcal{F}}\varphi_k(F).
    \]
    By Theorem \ref{angles}, it follows that
    \[
        \gamma^2_k(P) \leq f_k(P) - \rho(d,d-k-1)f_{d-1}(P). \qedhere
    \]
\end{proof}

By Lemma \ref{grass_expectation}, we know $\gamma_k^2(P) \geq f_k(\Delta^{d-2}) = {d-1 \choose k+1}$. Combining this with Proposition \ref{grassmann} yields the following bounds on $f_k(P)$ in terms of $f_0(P)$ and $f_{d-1}(P)$, a slight improvement on Theorem \ref{bound}.

\begin{corollary}
    Let $P$ be a $d$-polytope, and suppose $0 \leq k \leq d-1$. Then
    \begin{align*}
        f_k(P) &\geq \rho(d,k)f_0(P) + {d-1 \choose k-1},\\
        f_k(P) &\geq \rho(d,d-k-1)f_{d-1}(P) + {d-1 \choose k+1}.
    \end{align*}
\end{corollary}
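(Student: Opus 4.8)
The plan is to derive both inequalities directly from Proposition \ref{grassmann} by inserting a lower bound on $\gamma_k^2(P)$ for the second one, and then dualizing to obtain the first. For the second inequality, I would start from
\[
f_k(P) \geq \rho(d,d-k-1)f_{d-1}(P) + \gamma_k^2(P),
\]
and it suffices to show $\gamma_k^2(P) \geq \binom{d-1}{k+1}$. This is exactly the bound flagged in the paragraph before the corollary: by Lemma \ref{grass_expectation}, $\gamma_k^2(P) = Ef_k(\pi(P))$ where $\pi$ is projection onto a uniformly random codimension-two subspace. Since $\pi(P)$ is a $(d-2)$-polytope, it has at least $\binom{d-1}{k+1}$ faces of dimension $k$ (the simplex $\Delta^{d-2}$ being the minimizer of $f_k$ among $(d-2)$-polytopes, with equality iff $\pi(P)$ is a simplex), so the expectation is at least $\binom{d-1}{k+1}$. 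The only subtlety is that $\pi(P)$ need not be full-dimensional in $H$ for a measure-zero set of subspaces, and $f_k(\pi(P))$ could a priori be smaller if $\dim\pi(P)<d-2$; but this happens with probability zero and does not affect the expectation, so the bound $\gamma_k^2(P) \geq \binom{d-1}{k+1}$ still holds.

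For the first inequality, I would pass to the polar dual. Set $P^\ast$, a $d$-polytope with $f_j(P^\ast) = f_{d-1-j}(P)$ for all $j$. Applying the second inequality (already proved) to $P^\ast$ with the index $k$ replaced by $d-1-k$ gives
\[
f_{d-1-k}(P^\ast) \geq \rho(d, d-(d-1-k)-1)f_{d-1}(P^\ast) + \binom{d-1}{(d-1-k)+1},
\]
i.e.\ $f_k(P) \geq \rho(d,k)f_0(P) + \binom{d-1}{d-k}$. Finally I would note $\binom{d-1}{d-k} = \binom{d-1}{k-1}$ by symmetry of binomial coefficients, which matches the stated form. One should double-check the edge cases where the index in $\rho$ or the binomial coefficient falls outside the "natural" range (e.g.\ $k=0$ or $k=d-1$); with the standard convention that $\binom{a}{b}=0$ for $b<0$ or $b>a$, these reduce to Theorem \ref{bound} with an added zero term, so nothing breaks.

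The main obstacle, such as it is, is purely bookkeeping: making sure the substitution $k \mapsto d-1-k$ is carried through $\rho$ and through the binomial correctly, and confirming that $\rho(d, d-k-1)$ is genuinely self-dual to $\rho(d,k)$ under this substitution (it is, since $d-(d-1-k)-1 = k$). There is no hard analytic or combinatorial content beyond what Proposition \ref{grassmann} and Lemma \ref{grass_expectation} already supply; the corollary is essentially a repackaging. I would present it in three short steps: (1) quote $\gamma_k^2(P) \geq \binom{d-1}{k+1}$ from Lemma \ref{grass_expectation} and the simplex-minimizer fact; (2) combine with Proposition \ref{grassmann} for the $f_{d-1}$ bound; (3) dualize and re-index for the $f_0$ bound.
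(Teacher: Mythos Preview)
Your proposal is correct and mirrors the paper's own argument: bound $\gamma_k^2(P)$ below by $f_k(\Delta^{d-2})=\binom{d-1}{k+1}$ via Lemma~\ref{grass_expectation}, plug into Proposition~\ref{grassmann} for the $f_{d-1}$ inequality, and dualize for the $f_0$ inequality. The paper states this in one sentence before the corollary; your version is simply more explicit about the dualization bookkeeping and the measure-zero caveat.
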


\subsection{Face Number Relations from a Fixed Projection}
\label{ss:combinatorial}

There is a deeper, combinatorial identity hidden behind Proposition \ref{grassmann}. This identity describes the face numbers of a polytope $P$ by its behavior under any codimension two orthogonal projection.

\begin{definition}
    Let $P$ be a $d$-polytope and $S$ an $m$-dimensional subspace of $\mathds{R}^d$. We say $S$ is in \emph{general position with respect to $P$} if for all $0 \leq k \leq d-1$, all $k$-dimensional faces $G$ of $P$, and all points $z$ in the relative interior of $G$,
    \[
        \dim((S+z) \cap G) = \max\{0,m+k-d\},
    \]
    where $S+z$ is the translation of $S$ by the vector $z$.
\end{definition}

\begin{proposition}
\label{combinatorial}
    Let $P$ be a $d$-polytope with $d \geq 3$, $S \subset \mathds{R}^d$ a two-dimensional subspace in general position with respect to $P$, and $\pi:\mathds{R}^d \to S^\perp$ the orthogonal projection map. Let $\mathcal{F}$ be the set of facets of $P$. Then for all $0 \leq k \leq d-3$,
    \[
        f_k(P) - f_k(\pi(P)) = \frac{1}{2}\sum_{F \in \mathcal{F}} \bigl[f_k(F) - f_k(\pi(F)) \bigr].
    \]
\end{proposition}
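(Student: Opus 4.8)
The first ingredient is a ``derandomized'' form of Lemma~\ref{grass_expectation}. Fix $0 \le k \le d-3$. For a $k$-face $G$ of $P$ and $z \in \operatorname{relint}(G)$, I claim that $\pi(G)$ is a proper face of $\pi(P)$ if and only if $(S+z) \cap P = \{z\}$, and likewise, for any facet $F \supseteq G$, that $\pi(G)$ is a proper face of $\pi(F)$ if and only if $(S+z) \cap F = \{z\}$. (Here I use that, $S$ being generic, $S \cap \operatorname{dir}(F)$ is a line, so $\pi|_F$ is just orthogonal projection of the $(d-1)$-polytope $F$ along that line; and that general position forces $S\cap\operatorname{dir}(\operatorname{aff}H)=\{0\}$ for every face $H$ of $P$ with $\dim H \le d-2$, so $\pi$ is injective on every such $H$.) One implication is soft: if $(S+z)\cap P \ne \{z\}$, then general position rules out $S+z$ being a supporting plane, so $S+z$ meets $\operatorname{int}(P)$, whence $\pi(z) \in \operatorname{int}(\pi(P))$ and $\pi(G)$ is not a proper face. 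The reverse implication is where work is needed: $(S+z)\cap P = \{z\}$ is equivalent to $S \cap T_z P = \{0\}$, and a polar-duality computation inside $(\operatorname{dir}(\operatorname{aff}G))^\perp$ shows this is equivalent to the relative interior of the outer normal cone $N(G,P)$ meeting $S^\perp$; any functional in that intersection vanishes on $S$, so it descends to a linear functional on $S^\perp$ whose maximum over $\pi(P)$ is attained exactly on $\pi(G)$. Combined with the standard fact (again from general position) that $G \mapsto \pi(G)$ restricts to a bijection between the $k$-faces of $P$ surviving as proper faces and the $k$-faces of $\pi(P)$, and similarly for each facet, this gives
\[
f_k(P)-f_k(\pi(P)) = \#\{\,k\text{-faces }G:\ (S+z_G)\cap P\ne\{z_G\}\,\},
\]
\[
f_k(F)-f_k(\pi(F)) = \#\{\,k\text{-faces }G\le F:\ (S+z_G)\cap F\ne\{z_G\}\,\},
\]
where $z_G$ is any point of $\operatorname{relint}(G)$ (both conditions depend only on $G$).

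\textbf{Step 2: reduction to a local count.}
Summing the second identity over all facets and switching the order of summation, the right-hand side of Proposition~\ref{combinatorial} equals $\tfrac12\sum_G \#\{\,F\in\mathcal{F}:\ G\le F,\ (S+z_G)\cap F\ne\{z_G\}\,\}$, the sum over $k$-faces $G$ of $P$. Comparing with the first identity of Step~1, it therefore suffices to prove the following \emph{local identity}: for every $k$-face $G$ of $P$ and $z\in\operatorname{relint}(G)$,
\[
\#\{\,F\in\mathcal{F}:\ G\le F,\ (S+z)\cap F\ne\{z\}\,\} \;=\; 2\cdot\mathbf{1}\bigl[(S+z)\cap P\ne\{z\}\bigr].
\]
Geometrically this says a generic $2$-plane through an interior point of $G$ either misses $P$ near $G$ (right side $0$) or slices into $P$ through one facet containing $G$ and back out through another (right side $2$).

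\textbf{Step 3: the local identity via the tangent cone.}
Translate $z$ to the origin and let $T = T_z P$, a full-dimensional polyhedral cone whose facets $\Phi_F := T\cap\operatorname{dir}(F)$ correspond bijectively to the facets $F\supseteq G$. Using $(S+z)\cap F = (z + (S\cap\operatorname{dir}(F)))\cap P$ and the fact that the tangent cone detects local extension, one gets $(S+z)\cap F\ne\{z\}$ iff $S\cap\Phi_F\ne\{0\}$, and $(S+z)\cap P\ne\{z\}$ iff $S\cap T\ne\{0\}$. In the latter case Step~1 shows $S+z$ meets $\operatorname{int}(P)$, so $S$ meets $\operatorname{int}(T)$ and $S\cap T$ is $2$-dimensional; moreover $S\cap T$ is pointed (a line in it would lie in $\operatorname{dir}(\operatorname{aff}G)$, hence in $S\cap\operatorname{dir}(\operatorname{aff}G)=\{0\}$), so it is a planar wedge of angle in $(0,\pi)$ with two bounding rays. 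Each bounding ray lies in $\partial T$ but avoids every face of $T$ of dimension $\le d-2$ (such a face spans some $\operatorname{dir}(\operatorname{aff}H)$ with $\dim H\le d-2$, which meets $S$ only at $0$), so each lies in the relative interior of a unique facet; these two facets are distinct, and any further facet meeting $S$ nontrivially would have to meet $S$ inside the two bounding rays (its relative interior being disjoint from $\operatorname{int}(T)\supseteq\operatorname{relint}(S\cap T)$) and hence coincide with one of them. Thus exactly two facets of $T$ meet $S$ outside the origin when $S\cap T\ne\{0\}$, and none do otherwise. This is the local identity, and assembling the pieces proves Proposition~\ref{combinatorial}.

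\textbf{Main obstacle.}
The combinatorial heart is the local identity of Step~2, whose meaning is transparent; the genuine labour is in Step~1 --- upgrading the expectation-level statement of Lemma~\ref{grass_expectation} to a pointwise one via the normal-cone/polar-duality argument --- together with the systematic use of general position to rule out all degenerate incidences (tangency of $S$ to $T$, a bounding ray of the wedge falling into a lower-dimensional face, two facets casting the same shadow edge, and so on), each of which reduces to the single clean consequence $S\cap\operatorname{dir}(\operatorname{aff}H)=\{0\}$ for $\dim H\le d-2$.
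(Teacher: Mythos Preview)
Your argument is correct and follows essentially the same route as the paper: both reduce to showing that each $k$-face $G$ that fails to survive the projection has exactly two facets $F\supseteq G$ for which $G$ also fails to survive in $\pi(F)$, and both establish this by intersecting $P$ with the $2$-plane $S+z$. The only cosmetic difference is that the paper argues with the global polygon $(S+z)\cap P$ (observing that $z$ is a vertex with exactly two incident edges, each lying in a facet containing $G$), whereas you localize to the tangent cone $T_zP$ and the planar wedge $S\cap T_zP$; near $z$ these two pictures coincide.
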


\begin{proof}
    Let $\mathcal{G}$ be the set of $k$-dimensional faces $G$ of $P$ such that $\pi(G)$ is not a face of $\pi(P)$. Let $\mathcal{E}$ be the set of pairs $(F,G)$ such that $F$ is a facet of $P$, $G$ is a $k$-dimensional face of $F$, and $\pi(G)$ is not a face of $\pi(F)$. Then
    \begin{align*}
        |\mathcal{G}| &= f_k(P) - f_k(\pi(P)),\\
        |\mathcal{E}| &= \sum_{F \in \mathcal{F}}\bigl[f_k(F) - f_k(\pi(F)) \bigr].
    \end{align*}
    
    Suppose $G \in \mathcal{G}$, and let $z$ be an arbitrary point in the relative interior of $G$. Since $S$ is in general position with respect to $P$, each facet $F$ of $P$ containing $G$ has $\dim((S+z) \cap F) \leq 1$. If $(F,G) \in \mathcal{E}$, then $\dim((S+z) \cap F) = 1$; if not, then $(S+z) \cap F = \{z\}$. Meanwhile, for all faces $K$ of $P$ containing $G$ which are not facets, $(S+z) \cap K = \{z\}$.

    Since $\pi(G)$ is not a face of $\pi(P)$, $S+z$ must intersect the relative interior of $P$. Thus, $(S+z) \cap P$ is a polygon. For all facets $F$ of $P$ containing $G$, $\dim((S+z) \cap F) = 1$ if and only if $(S+z) \cap F$ is an edge of $(S+z) \cap P$ containing $z$. Thus, there are exactly two facets $F, F'$ of $P$ with $(F,G),(F',G) \in \mathcal{E}$.

    Now consider an arbitrary $(F,G) \in \mathcal{E}$. We know $\dim \pi(F) = \dim \pi(P) = d-2$, so $\pi(F)$ cannot be a proper face of $\pi(P)$. Thus, $\operatorname{relint}(\pi(G)) \subset \operatorname{relint}(\pi(F)) \subseteq \operatorname{relint}(\pi(P))$. It follows that $G \in \mathcal{G}$.

    We have shown that $(F,G) \mapsto G$ is a well-defined map from $\mathcal{E}$ to $\mathcal{G}$, under which the preimage of each $G \in \mathcal{G}$ has size exactly two. Thus,
    \[
        f_k(P) - f_k(\pi(P)) = \frac{1}{2}\sum_{F \in \mathcal{F}} \bigl[f_k(F) - f_k(\pi(F)) \bigr].\qedhere
    \]
\end{proof}

Let $P$ be a polytope and $F$ a facet of $P$. Let $S, \pi$ be as defined in Proposition \ref{combinatorial}, and let $T$ be the image of the orthogonal projection of $S^\perp$ onto $\operatorname{aff}(F)$, so $T$ has codimension one in $\operatorname{aff}(F)$. Then $\pi(F)$ is combinatorially equivalent to the orthogonal projection of $F$ onto $T$. If $S$ is chosen uniformly at random, then the resulting $T \subset \operatorname{aff}(F)$ obeys a uniform distribution as well. It follows by Theorem \ref{feldman} that
\[
    E\bigl[f_k(F) - f_k(\pi(F)) \bigr] = 2\varphi_k(F).
\]
Thus, applying linearity of expectation to Proposition \ref{combinatorial} yields exactly the equation of Proposition \ref{grassmann}.

\begin{remark}
Proposition \ref{combinatorial} illustrates a remarkable property of the cyclic polytope $C(d,n)$. Let $\pi:\mathds{R}^d \to \mathds{R}^{d-2}$ be the projection onto the first $d-2$ coordinates, so $\pi(C(d,n)) = C(d-2,n)$. Then by Proposition \ref{combinatorial} and Lemma \ref{cyclic},
\[
    \sum_{F \in \mathcal{F}} \bigl[f_k(F) - f_k(\pi(F)) \bigr] = 2\rho(d,d-k-1)f_{d-1}(C(d,n)),
\]
where $\mathcal{F}$ is the set of facets of $C(d,n)$. Thus, all facets $F \in \mathcal{F}$ simultaneously attain
\[
    f_k(F) - f_k(\pi(F)) = 2\rho(d,d-k-1),
\]
the minimum possible value for any $(d-1)$-polytope $F$ and orthogonal projection $\pi$ onto a codimension one subspace in general position. This property helps explain why cyclic polytopes have among the lowest face numbers for polytopes with a fixed number of facets, and optimistically, it may be a clue toward proving the Generalized Upper Bound Conjecture.
\end{remark}

\section{Acknowledgements}
The author would like to thank Isabella Novik for her dedicated mentorship and guidance in writing this paper. The author was partially supported by a graduate fellowship from NSF grants DMS-1953815 and DMS-2246399.


\bibliography{bibliography}
\bibliographystyle{plain}

\end{document}